\documentclass[12pt]{article}  
\usepackage{graphicx} 
\usepackage{color}

\usepackage[english]{babel}
\usepackage[utf8]{inputenc}
\usepackage[T1]{fontenc}
\usepackage{colortbl} 
\usepackage[a4paper,left=2.5cm,right=2.5cm, top=2.25cm ,bottom=2.25cm, twoside]{geometry}
\usepackage{xcolor}
\usepackage{stmaryrd}
\usepackage{amssymb}
\usepackage{amsmath}
\usepackage{libertine}
\usepackage{arydshln}
\usepackage[nonumberlist]{glossaries}
\usepackage{calc}
\usepackage[]{titlesec} 
\definecolor{linkColor}{HTML}{32a852}
\usepackage[colorlinks=true,citecolor=linkColor,linkcolor=black]{hyperref}
\usepackage{fancyhdr}
\usepackage{lipsum}

\usepackage{setspace}
\usepackage[subfigure]{tocloft}
\usepackage{subfigure}
\usepackage{caption}

\usepackage{amssymb}
\usepackage{amsmath}
\usepackage{graphicx}
\usepackage{latexsym}
\usepackage{wrapfig}
\usepackage{placeins}
\usepackage{lineno}
\usepackage{float}
\usepackage{cleveref}
\usepackage{hyperref}

\usepackage{tikz,array,graphicx,amsfonts,amsmath,amssymb,amsthm,mathrsfs}
\newtheorem{theorem}{Theorem}[section] 

\newtheorem{lemma}[theorem]{Lemma}

\usepackage{wrapfig}
\usepackage[nottoc]{tocbibind}
\usepackage{graphicx}
\usepackage{placeins}
\usepackage{float}

\usepackage{comment}

\renewcommand{\div}{{\hbox{div}}}
\renewcommand{\a}{\frac{2}{\delta x^2}}

\title{Flux-equilibrated based \,\\a posteriori error analysis for an interface problem with CutFEM}

\author{Daniela Capatina and Aimene Gouasmi}
\date{ }
\begin{document}


\maketitle

\begin{abstract}
This paper addresses the local recovery of conservative fluxes and the a posteriori error analysis for an elliptic interface problem with discontinuous coefficients. The transmission conditions on the interface are imposed by means of Nitsche's method and the discretization is carried out using conforming finite elements on unfitted meshes via the CutFEM method. A flux is subsequently defined in the global Raviart-Thomas space, ensuring that it satisfies the natural conservation property on the cut cells, and is then employed in the a posteriori error analysis. We prove here the sharp reliability of the error estimator and show a numerical experiment which illustrates the approach.
\end{abstract}



\section{Introduction}

The importance of reconstructing equilibrated fluxes from primal discrete solutions is widely recognized in computational mathematics \cite{vohralik2011guaranteed, Dana2016, Aimene}. One application is in a posteriori error analysis \textcolor{black}{and adaptive mesh refinement} \cite{Aposteriori1, Aposteriori2}, where the difference between the numerical and the recovered flux serves as a reliable error estimator. 

This paper investigates a 2D second-order elliptic interface problem with discontinuous coefficients and standard transmission conditions across the interface. The numerical approximation of the solution is achieved through the Cut Finite Element Method  \cite{CutFEM2}, which is designed for cases where the mesh does not fit the interface. Our goal is to recover locally conservative fluxes in the Raviart-Thomas space, by using the methodology introduced in \cite{Dana2016} for the Poisson equation on fitted meshes. The reconstruction relies on an auxiliary mixed problem, whose primal solution coincides with the original finite element solution, whereas the multiplier defined on the mesh edges is used in the definition of the flux. Notably, these multipliers are computed locally by solving explicit linear systems at each vertex, in contrast to other reconstruction techniques such as \cite{vohralik2011guaranteed} where solving a mixed problem is necessary.   

In this work, we focus on the interface diffusion problem, discretized by means of piecewise linear conforming elements on an unfitted mesh. First, we describe how to construct an 
$ H(div,\Omega)$-conforming flux in the whole domain, using a hybrid mixed formulation with Lagrange multipliers associated to each sub-domain. This approach yields a flux which satisfies the natural local conservation property while preserving the transmission condition on the interface. We then employ the recovered flux in a posteriori error analysis and establish the sharp reliability of the corresponding error estimator, \textcolor{black}{i.e. we bound the error by the estimator plus a higher order term, with a constant in front of the estimator equal to $1$.} Finally, we present a numerical experiment which illustrates the proposed approach.


\section{The continuous and discrete problems}
\label{sec:1}

Let $\Omega$ be a 2D polygonal domain with an interface $\Gamma$ dividing $\Omega$ into two non-overlapping sub-domains: $\bar{\Omega} = \bar{\Omega}^1 \cup \bar{\Omega}^2$, where $\partial \Omega^1 \cap \partial \Omega^2 = \Gamma$. The unit normal vector to $\Gamma$, denoted $n_{\Gamma}$, is oriented from $\Omega^1$ to $\Omega^2$. We consider the following model problem:
\begin{equation}\label{eq: continuous_problem}
	\left\{
	\begin{aligned}
		-\text{div}(K\nabla u^{i}) = f^i \quad & \text{in } \Omega^i, \,\, i = 1, 2, \\
		u = 0 \quad & \text{on } \partial \Omega,\\
		[u] = 0, \;\; [K \nabla u \cdot n_{\Gamma}] = 0 \quad & \text{on } \Gamma,
	\end{aligned}
	\right.
\end{equation}
where $[u] = u^1 - u^2$ represents the jump across $\Gamma$. Assume $f^i \in L^2(\Omega^i)$ and for simplicity, let $K|_{\Omega^i} = k_i > 0$ for $i = 1, 2$. The study can be extended to the case of piecewise constant positive definite tensors $K$ and to a non-zero jump of the normal fluxes across the interface.

For the finite element approximation of \eqref{eq: continuous_problem}, we introduce the following notation. Let $\mathcal{T}_h$ denote a triangular regular mesh of $\Omega$, whose elements are closed sets, and $\mathcal{F}_h$ the set of edges. The diameter of $T \in \mathcal{T}_h$ (and {the length} of $F \in \mathcal{F}_h$) is denoted $h_T$ (and $h_F$). For an interior {edge} $F$, $n_F$ is a fixed unit normal vector to $F$, oriented from $T_F^-$ to $T_F^+$, where $T_F^-$ and $T_F^+$ are the two triangles sharing $F$. If $F \subset \partial \Omega$, then $n_F$ is the outward normal vector to $\Omega$ whereas if $F \subset \Gamma$, then $n_F = n_{\Gamma}$. For $\omega \subset \mathbb{R}^d$ with $1 \leq d \leq 2$, denote the $L^2(\omega)$-norm by $\|\cdot\|_\omega$ and the $L^2(\omega)$-orthogonal projection onto $P^{{m}}(\omega)$ by $\pi^{{m}}_{\omega}$, for ${m} \in \mathbb{N}$. For $i = 1, 2$, we define:
\begin{equation*}
	\mathcal{T}_h^{i} = \big\{ T \in \mathcal{T}_h \ ; \ T \cap \Omega^{i} \neq \emptyset \big\}, \quad \mathcal{F}_h^{i} = \big\{ F \in \mathcal{F}_h \ ; \ F \cap \Omega^{i} \neq \emptyset \big\}
\end{equation*}
and we set $\Omega_h^i = \bigcup_{T \in \mathcal{T}_h^i} T$; note that $\Omega^i \subset \Omega_h^i$. For cut elements, let:
\begin{equation*}
	\begin{split}
		\mathcal{T}_h^{\Gamma} = \big\{ T \in \mathcal{T}_h \ ; \ T \cap \Gamma \neq \emptyset \big\}, & \quad \mathcal{F}_h^{\Gamma} = \big\{ F \in \mathcal{F}_h \ ; \ F \cap \Gamma \neq \emptyset \big\}, \\
		\mathcal{F}_g^i = \big\{ F \in \mathcal{F}_h^i \ ; \ (T_F^+ \cup T_F^-) \cap \Gamma \neq \emptyset \big\}, & \quad T^i = T \cap \Omega^i \,\,\, \forall T \in \mathcal{T}_h^{\Gamma}.
	\end{split}
\end{equation*}

In order to focus on the flux reconstruction, we assume here that $\Gamma$ is a polygonal line such that for each $T \in \mathcal{T}_h^{\Gamma}$, the intersection $\Gamma_T = T \cap \Gamma$ is a line. For a function $v$ discontinuous across $\Gamma$, let $v^i = v_{|\Omega^i}$ and define the two following means at a point $x \in \Gamma$:
\begin{equation*}
	\{v\}(x) = \omega_1 v^1(x) + \omega_2 v^2(x), \quad \{v\}^*(x) = \omega_2 v^1(x) + \omega_1 v^2(x),
\end{equation*}
where the weights $\omega_1$, $\omega_2$ and the harmonic mean $k_{\Gamma}$ are given (cf. \cite{Ern}) by:
\begin{equation*}
	\omega_1 = \frac{k_2}{k_2 + k_1}, \quad \omega_2 = \frac{k_1}{k_1 + k_2}, \quad k_{\Gamma} = \frac{k_1 k_2}{k_1 + k_2}.
\end{equation*}

Additionally, we introduce the arithmetic mean $\langle v \rangle = \frac{1}{2}(v^- + v^+)$ and the jump $[\![v]\!] = v^- - v^+$ across an interior edge $F \in \mathcal{F}_h^i$; for a boundary edges, we set $\langle v \rangle = [\![v]\!] = v$. Finally, for $i = 1, 2$, let
$V^i = \big\{ v \in H^1(\Omega_h^i) \ ; \ v_{|(\partial\Omega^i \setminus \Gamma)} = 0 \big\}$ and
\begin{equation*}
	\mathcal C_h^i = \big\{ v \in V^i \ ; \ v_{|T} \in P^1(T), \,\, \forall T \in \mathcal{T}_h^{i} \big\}, \quad \mathcal C_h = \mathcal C_h^{1} \times \mathcal C_h^{2}.
\end{equation*}

We consider the well-posed weak formulation of problem \eqref{eq: continuous_problem}: Find $u \in H^1_0(\Omega)$ such that
\begin{equation}\label{eq: continuous_problem_weak}
	\int_{\Omega} K \nabla u \cdot \nabla v \, dx = \int_{\Omega} f v \, dx \quad \forall v \in H^1_0(\Omega).
\end{equation}

For its numerical approximation, we apply Nitsche's method  
to enforce the transmission conditions on $\Gamma$ as in \cite{hansbo2002unfitted}, and use the CutFEM approach \cite{CutFEM2} to ensure the robustness with respect to the interface geometry by adding a ghost penalty stabilization term. The discrete problem reads: Find $u_h = (u_h^1, u_h^2) \in \mathcal C_h$ such that
\begin{equation}\label{eq:Primal_conform_Formulation}
	a_h(u_h, v_h) = l_h(v_h) \quad \forall v_h \in \mathcal C_h,
\end{equation}
where:
\begin{equation*}
	\begin{split}
 a_h(u_h, v_h) = & \sum_{i=1}^{2} \bigg({\sum_{T \in \mathcal T_h^{i}}} \int_{{T \cap} \Omega^i} k_i \nabla u_h^i \cdot \nabla v_h^i \, dx +  \sum_{F \in \mathcal{F}_g^i} \gamma_g h_F \int_{F} k_i [\![\partial_n u_h^{i}]\!] [\![\partial_n v_h^{i}]\!] \, ds\bigg)\\
		 &+ \sum_{T \in \mathcal T_h^{\Gamma}} \int_{\Gamma_T} \bigg( \frac{\gamma k_{\Gamma}}{h_T} [u_h] [v_h] - \{K \nabla u_h \cdot n_{\Gamma}\} [v_h] - \{K \nabla v_h \cdot n_{\Gamma}\} [u_h] \bigg) \, ds,\\
	l_h(v_h) = & \sum_{i=1}^{2} \int_{\Omega^i} f^i v_h^i \, dx.
	\end{split}
\end{equation*}

The stabilization parameters $ \gamma_g, \,\gamma>0 $ are chosen independently of the mesh/interface geometry and diffusion coefficients. For any $ v_h\in \mathcal{C}_h $, we define the norm:
\begin{equation*}
	\|v_h\|_h^2 = \sum_{i=1}^2  \bigg( k_i||\nabla v_h^i||^2_{\Omega^i} +  \sum_{F \in \mathcal{F}_g^i} k_i  h_F  || [\![\partial_n v_h^{i}]\!] ||^2_F\bigg) + \sum_{T \in \mathcal{T}_h^\Gamma}  \frac{k_\Gamma}{h_T}||  [v_h]|| ^2_{\Gamma_T}.
\end{equation*}

For sufficiently large $ \gamma $, it is well-known that  $ a_h(\cdot,\cdot) $ is uniformly coercive with respect to $ \|\cdot\|_h $ on $ \mathcal{C}_h $. Consequently, problem \eqref{eq:Primal_conform_Formulation} is well-posed thanks to the Lax-Milgram theorem.


\section{The auxiliary mixed formulation}
\label{sec:2}

Let $\mathcal D_h=\mathcal D_h^1\times \mathcal D_h^2$ and $\mathcal M_h=\mathcal M_h^1\times \mathcal M_h^2$, where for $i=1,2$, we define:
\[
\begin{split}
	\mathcal D_h^i &= \{v \in L^2(\mathcal T_h^i) ;\, v|_T \in P^1(T), \, \forall T \in \mathcal T_h^i \}, \\
	\mathcal M_h^i &= \{\mu \in L^2(\mathcal F_h^i) ;\, \mu|_F \in P^1(F), \, \forall F \in \mathcal F_h^i, \,  
	\textstyle{\sum_{F \in \mathcal{F}_N} \mathfrak{s}_{F,N} h_F \mu|_F(N)} = 0, \, \forall N \in \overset{\circ}{\mathcal {N}^i_h}\}.
\end{split}  
\]
Here above, $\overset{\circ}{\mathcal{N}_h^i}$ denotes the set of nodes interior to $\Omega_h^i$, $\mathcal{F}_N$ is the set of edges connected to the node \(N\) and \(\mathfrak{s}_{F,N}$ is a sign function which takes the value \(1\) or \(-1\), depending on the orientation of \(n_F\) relative to the clockwise rotation sense around \(N\). These spaces are equipped with the following norms: for $v_h \in \mathcal D_h$ and $\mu_h \in \mathcal M_h$,
\[
\|{v_h}\|_{\mathcal D_h}^2 = \|v_h\|_h^2 + \sum_{i=1}^2 \sum_{F \in \mathcal F_h^i}  k_i h_F^{-1}||[\![v_h^i]\!]||_F^2, \qquad 
	\|{\mu_h}\|_{\mathcal M_h}^2 = \sum_{i=1}^2 \sum_{F \in \mathcal F_h^i}  k_i h_F ||\mu_h^i||_F^2.
\]

Following the ideas of \cite{Dana2016,Aimene}, we introduce the following auxiliary mixed formulation: Find \((\tilde{u}_h, \theta_h) \in \mathcal D_h \times \mathcal M_h\) such that

\begin{equation} \label{eq: Mixed formulation}
	\begin{split}
		\tilde{a}_h(\tilde{u}_h, v_h) + b_h(\theta_h, v_h) &= l_h(v_h), \quad \forall v_h \in \mathcal{D}_h, \\
		b_h(\mu_h, \tilde{u}_h) &= 0, \qquad\;\; \forall \mu_h \in \mathcal{M}_h,
	\end{split}
\end{equation}
where $\tilde{a}_h(\cdot, \cdot) = a_h(\cdot, \cdot) - d_h(\cdot, \cdot)$ and $b_h(\mu_h, v_h) = \sum_{i=1}^2 b_h^i(\mu_h^i, v_h^i)$, with:
\[
\begin{split}
	d_h(\tilde{u}_h, v_h) &= \sum_{i=1}^2 \sum_{F \in \mathcal{F}_h^i} \int_{F \cap \Omega^i} 
	\left(\langle k_i \nabla \tilde{u}_h^i \cdot n_F \rangle [\![v_h^i]\!] + \langle k_i \nabla v_h^i \cdot n_F \rangle [\![\tilde{u}_h^i]\!] \right) \, \mathrm{d}s, \\
	b_h^i(\mu_h^i, v_h^i) &= \sum_{F \in \mathcal{F}_h^i} \frac{k_i h_F}{2} \sum_{N \in \mathcal{N}_F} \mu_h^i|_F(N)[\![v_h^i]\!](N) \approx \sum_{F \in \mathcal{F}_h^i} \int_F k_i \mu_h^i [\![v_h^i]\!] \, \mathrm{d}s.
\end{split}
\]
Here above, $\mathcal{N}_F$ is the set of nodes belonging to \(F\). We have obtained similar results to those in \cite{Dana2016,Aimene}; we give them below and we refer the reader to  \cite{these} for further details.

\begin{lemma} \label{thrm: kernel_b_h}
The space $\mathrm{Ker}\, b_h = \{v_h \in \mathcal{D}_h ;\, b_h(\mu_h, v_h) = 0, \, \forall \mu_h \in \mathcal{M}_h \} $ is equal to \(\mathcal C_h\).
\end{lemma}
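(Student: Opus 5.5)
The kernel condition splits by subdomain, since $b_h(\mu_h,v_h)=\sum_{i}b_h^i(\mu_h^i,v_h^i)$ and $\mathcal M_h=\mathcal M_h^1\times\mathcal M_h^2$ is a product space. It therefore suffices to prove $\mathrm{Ker}\,b_h^i=\mathcal C_h^i$ for each $i=1,2$. Since $b_h^i$ only involves the nodal values $[\![v_h^i]\!]|_F(N)$ for $F\in\mathcal F_h^i$, $N\in\mathcal N_F$, and $\mu_h^i|_F\in P^1(F)$ is determined by its two endpoint values, I will work throughout with the pairs $(F,N)$. I take the convention that only edges shared by two elements of $\mathcal T_h^i$ and edges lying on $\partial\Omega$ carry a jump, where on $\partial\Omega$ the jump is the trace. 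Edges on $\partial\Omega_h^i$ interior to $\Omega$ carry no jump term. Checking that this matches the intended definition is the first point to settle.

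\emph{Inclusion $\mathcal C_h^i\subset\mathrm{Ker}\,b_h^i$.} A function $v\in\mathcal C_h^i$ is continuous on $\Omega_h^i$, so $[\![v]\!]=0$ on every interior edge. It also vanishes on $\partial\Omega^i\setminus\Gamma$, so its trace is zero on edges of $\partial\Omega$. Hence every nodal jump is zero and $b_h^i(\mu,v)=0$ for all $\mu$.

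\emph{Converse.} Let $v\in\mathcal D_h^i$ satisfy $b_h^i(\mu,v)=0$ for all $\mu\in\mathcal M_h^i$. The nodal values $\mu|_F(N)$ are independent across different nodes $N$. The only constraints are, at each interior node $N\in\overset{\circ}{\mathcal N_h^i}$, the single linear relation $\sum_{F\in\mathcal F_N}\mathfrak s_{F,N}h_F\mu|_F(N)=0$. I therefore localize node by node.

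At a node $N$ not interior to $\Omega_h^i$ there is no constraint. Choosing $\mu$ supported at $(F,N)$ for one edge $F$ gives $[\![v]\!]_F(N)=0$ for every $F\ni N$.

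At an interior node $N$, the condition reads $\sum_{F\in\mathcal F_N}\tfrac{k_ih_F}{2}\mu_F(N)[\![v]\!]_F(N)=0$ for all vectors $(\mu_F(N))_F$ orthogonal to $(\mathfrak s_{F,N}h_F)_F$. Finite-dimensional linear algebra then gives a constant $c_N$ with $\tfrac{k_ih_F}{2}[\![v]\!]_F(N)=c_N\,\mathfrak s_{F,N}h_F$, i.e. $[\![v]\!]_F(N)=c'_N\,\mathfrak s_{F,N}$.

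Now I use the geometry around $N$. Order the triangles $T_1,\dots,T_m$ of the patch of $N$ clockwise, with $F_j=T_j\cap T_{j+1}$ (indices mod $m$). The sign convention $\mathfrak s_{F,N}$ is exactly what makes $\mathfrak s_{F_j,N}[\![v]\!]_{F_j}(N)=v|_{T_j}(N)-v|_{T_{j+1}}(N)$. Summing over $j$, the right-hand side telescopes to zero. Since $\mathfrak s_{F,N}^2=1$, this yields $m\,c'_N=0$, hence $c'_N=0$ and all jumps at $N$ vanish.

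Consequently all nodal jumps of $v$ vanish. Because jumps of $P^1$ functions are affine on each edge, $[\![v]\!]=0$ on every interior edge of $\mathcal T_h^i$, so $v$ is continuous on $\Omega_h^i$, i.e. $v\in H^1(\Omega_h^i)$. The vanishing of the boundary-edge jumps gives $v=0$ on $\partial\Omega^i\setminus\Gamma$. Therefore $v\in\mathcal C_h^i$.

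The main obstacle is the bookkeeping of orientations, namely verifying that the definition of $\mathfrak s_{F,N}$ (orientation of $n_F$ relative to clockwise rotation about $N$) really turns the signed sum of jumps into the telescoping sum. A secondary issue is the treatment of nodes and edges on $\partial\Omega_h^i\setminus\partial\Omega$, where the jump convention must be read so that $\mathcal C_h^i$ functions, which need not vanish there, are not excluded. I would first fix an explicit patch picture with $n_{F_j}$ pointing from $T_j$ to $T_{j+1}$, where $\mathfrak s=1$, and check the general case by sign flips.
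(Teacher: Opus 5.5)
Your proof is correct. The paper gives no proof of this lemma; it defers to the thesis and to the fitted-mesh argument of \cite{Dana2016}. Your key ingredient, the telescoping identity $\sum_{F\in\mathcal F_N}\mathfrak s_{F,N}[\![v]\!]_F(N)=0$ at interior nodes of $\Omega_h^i$, is the natural one for this kind of constraint.

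The one difference from the most direct route is that you use this identity after an annihilator step: first $[\![v]\!]_F(N)=c'_N\mathfrak s_{F,N}$, then $m\,c'_N=0$. The identity can instead build an admissible test multiplier at once. Set $\mu|_F(N):=h_F^{-1}[\![v]\!]_F(N)$. It satisfies the constraint of $\mathcal M_h^i$ thanks to the weights $h_F$, and then $0=b_h^i(\mu,v)=\sum_{F\in\mathcal F_h^i}\frac{k_i}{2}\sum_{N\in\mathcal N_F}[\![v]\!]_F(N)^2$. This kills all nodal jumps at once, with no case split between interior and boundary nodes. Your version is longer, but it shows clearly what the constraint does: it removes exactly the one jump mode per interior node, $j_F=\mathfrak s_{F,N}$, that no broken $P^1$ function can produce.

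Your orientation worry is harmless, since a global sign flip of $\mathfrak s_{F,N}$ still telescopes. On the edge convention, what matters is that $b_h^i$ sums over $\mathcal F_h^i$, not over all sides of $\mathcal T_h^i$. A side on $\partial\Omega$ of a cut triangle may lie entirely in the other subdomain, where $\mathcal C_h^i$ imposes no boundary condition, and such a side is not in $\mathcal F_h^i$. So in your inclusion step the trace vanishes on the Dirichlet edges of $\mathcal F_h^i$, each of which meets $\partial\Omega^i\setminus\Gamma$ in a segment, so an affine trace vanishes on the whole edge. It need not vanish on every side lying on $\partial\Omega$.
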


\begin{theorem} \label{thrm: inf-sup_cond}
There exists a constant \(\beta>0\) independent of \(h\), \(K\) and \(\Gamma\) such that:
\[
	\inf_{\mu_h \in \mathcal{M}_h} \sup_{v_h \in \mathcal{D}_h} \frac{b_h(\mu_h, v_h)}{\|{\mu_h}\|_{\mathcal M_h} \|{v_h}\|_{\mathcal D_h}} \geq \beta.
\]
\end{theorem}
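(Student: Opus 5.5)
The inf-sup condition decouples over the two sub-domains. Both $b_h=\sum_i b_h^i$ and the squared norms are sums over $i=1,2$, apart from the coupling term $\sum_T \frac{k_\Gamma}{h_T}\|[v_h]\|^2_{\Gamma_T}$ in $\|v_h\|_h$. So I will build, for each $i$ and each $\mu_h^i\in\mathcal M_h^i$, a function $v_h^i\in\mathcal D_h^i$ satisfying two bounds:
\[
b_h^i(\mu_h^i,v_h^i)\ge c\sum_{F\in\mathcal F_h^i}k_ih_F\|\mu_h^i\|_F^2,
\qquad
\|v_h^i\|_{\mathcal D_h^i}\le C\Big(\sum_{F\in\mathcal F_h^i}k_ih_F\|\mu_h^i\|_F^2\Big)^{1/2}.
\]
Here $\|\cdot\|_{\mathcal D_h^i}$ collects the $i$-th contributions to $\|\cdot\|_{\mathcal D_h}$. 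The constants $c,C$ must depend only on mesh regularity. The interface term is then handled by $\|[v_h]\|^2_{\Gamma_T}\le 2(\|v_h^1\|^2_{\Gamma_T}+\|v_h^2\|^2_{\Gamma_T})$ together with $k_\Gamma\le k_i$.

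For fixed $i$ I will follow \cite{Dana2016,Aimene}. The sum in $b_h^i$ is a nodal quadrature on each edge, so I work with the degrees of freedom $v_h^i|_T(N)$ for $T\ni N$. The jump $[\![v_h^i]\!](N)$ on an edge $F\in\mathcal F_N$ is a difference of values from the two triangles sharing $F$. Around an interior node $N$, the edges and triangles of the patch alternate cyclically. Prescribing the jumps $[\![v_h^i]\!]|_F(N)=\mu_h^i|_F(N)$ then amounts to a cyclic difference system. It is solvable exactly when the signed sum of the prescribed jumps vanishes, and this is precisely the compatibility condition built into $\mathcal M_h^i$. The $h_F$ weights need a small adjustment: I will prescribe jumps proportional to $h_F\mu_h^i|_F(N)$ scaled by a local $h_N^{-1}$, which the mesh regularity permits. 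At boundary nodes of $\Omega_h^i$ the patch is not closed, so the system has a free end and is always solvable. I will fix the remaining constant in each patch by a zero-mean condition, which gives local stability $|v_h^i|_T(N)|\lesssim\sum_{F\ni N}|\mu_h^i|_F(N)|$. Plugging in yields $b_h^i(\mu_h^i,v_h^i)\simeq\sum_F k_ih_F^2\sum_N|\mu_h^i|_F(N)|^2\simeq\sum_Fk_ih_F\|\mu_h^i\|_F^2$ after rescaling.

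It remains to bound $\|v_h^i\|_{\mathcal D_h^i}$. The term $k_ih_F^{-1}\|[\![v_h^i]\!]\|_F^2$ is controlled directly by the construction. For the gradient on $T\cap\Omega^i$, I bound it by the gradient on the full triangle $T$ and use the inverse estimate $\|\nabla v\|_T\lesssim h_T^{-1}\|v\|_T\lesssim(\sum_{N}|v|_T(N)|^2)^{1/2}$, where the last step holds because we are in two dimensions. The same device handles the ghost-penalty terms: $h_F\|[\![\partial_nv]\!]\|_F^2\lesssim\|\nabla v\|^2_{T^+\cup T^-}$ by a trace inverse inequality on whole triangles. The interface term uses $h_T^{-1}\|v\|^2_{\Gamma_T}\lesssim h_T^{-1}|\Gamma_T|\max_T|v|^2\lesssim\sum_N|v|_T(N)|^2$, since $|\Gamma_T|\le Ch_T$ because $\Gamma_T$ is a segment in $T$. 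All these estimates are taken on whole triangles rather than on cut pieces, so the constants are independent of how $\Gamma$ cuts the mesh. The factors $k_i$ appear identically on both sides and cancel, giving independence of $K$.

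The main obstacle is the local solvability and stability of the nodal systems. The difficulty is in matching the weighted compatibility condition $\sum\mathfrak s_{F,N}h_F\mu_h^i|_F(N)=0$ with the precise form of the cyclic jump equations and signs. I will try prescribing jumps $h_F\mu_h^i|_F(N)/h_N$, with $h_N$ the mean edge length at $N$, and verify positivity of $b_h^i$ by using mesh regularity to compare $h_F/h_N$ with $1$. A further subtlety is the nodes on $\partial\Omega_h^i$, where $V^i$ imposes no condition inside $\Omega^i_h$ except on $\partial\Omega$. There I need that boundary edges carry the jump $[\![v]\!]=v$, which makes the patch system triangular and trivially stable.
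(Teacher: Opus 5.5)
The paper gives no proof of this theorem in the text; it defers to the thesis and to \cite{Dana2016,Aimene}. Your plan is the natural adaptation of that construction: decouple the subdomains, build $v_h^i$ node by node from prescribed nodal jumps, and bound every piece of $\|v_h\|_{\mathcal D_h}$ by inverse estimates on whole triangles together with $k_\Gamma\le k_i$. Those norm bounds are fine. The gap is your treatment of nodes on $\partial\Omega_h^i$.

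\textbf{Dirichlet nodes.} Genuinely free ends occur only on the fictitious boundary. An edge of $\partial\Omega_h^i$ interior to $\Omega$ cannot meet the open set $\Omega^i$, since otherwise both neighbouring triangles would lie in $\mathcal T_h^i$. Hence such an edge is not in $\mathcal F_h^i$ and carries no equation. Now take a node $N\in\partial\Omega$ whose fan $T_1,\dots,T_n$ lies in $\mathcal T_h^i$ and is bounded by two edges of $\partial\Omega$. Both extreme edges then belong to $\mathcal F_h^i$, with $[\![v]\!]=v$. You have $n$ unknowns $v|_{T_k}(N)$ and $n+1$ equations. Sweeping from one Dirichlet edge fixes everything, and the equation on the other edge is an extra condition, namely $\sum_{F\in\mathcal F_N}\mathfrak s_{F,N}[\![v]\!]_F(N)=0$, exactly as at an interior node. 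So the system is not ``triangular and trivially stable''. It is solvable only because of a nodal constraint on $\mu_h^i$ at $N$, which you must invoke. In other words, $\overset{\circ}{\mathcal N_h^i}$ has to be read as including every node whose fan is closed by edges of $\mathcal F_h^i$ on both sides.

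This is not cosmetic. Suppose $\mathcal M_h^i$ had no constraint at such an $N$. Take $\mu$ with $h_F\mu_F(N)=\mathfrak s_{F,N}$ for $F\in\mathcal F_N$ and all other nodal values zero. It lies in $\mathcal M_h^i$ and satisfies $b_h^i(\mu,\cdot)\equiv 0$, so no choice of $v_h$ could give the inf-sup bound.

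\textbf{The $h_N^{-1}$ factor.} Drop it. With $[\![v]\!]_F(N)=h_F\mu_F(N)/h_N$ you get
$b_h^i(\mu,v)=\tfrac12\sum_F k_ih_F^2\sum_N\mu_F(N)^2/h_N\simeq\sum_F k_i\|\mu\|_F^2$.
This is not $\sum_F k_ih_F\|\mu\|_F^2$, as you claim. Meanwhile $\|v\|_{\mathcal D_h}^2\simeq\sum_N Y_N$ with $Y_N=\sum_{F\ni N}k_i\mu_F(N)^2$. The ratio $b/\|v\|$ is then of order $\sum_N h_NY_N/(\sum_N Y_N)^{1/2}$. By Cauchy--Schwarz this is at most $(\sum_N h_N^2Y_N)^{1/2}\simeq\|\mu\|_{\mathcal M_h}$. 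It is much smaller when $\mu$ is spread over regions of very different mesh size, which is exactly the adaptive setting. A node-dependent factor cannot be removed afterwards by a global rescaling.

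No adjustment is needed in the first place. The weighted constraint $\sum_F\mathfrak s_{F,N}h_F\mu_F(N)=0$ is precisely the compatibility condition for the choice $[\![v]\!]_F(N)=h_F\mu_F(N)$, so what you call the main obstacle disappears. With that choice, fix the free constant by zero mean on patches where the solution is determined only up to a constant. Then:
\begin{itemize}
\item $|v|_T(N)|\lesssim\sum_{F\ni N}h_F|\mu_F(N)|$;
\item $b_h^i(\mu,v)=\tfrac12\sum_F k_ih_F^2\sum_N\mu_F(N)^2\simeq\sum_F k_ih_F\|\mu\|_F^2$;
\item your whole-triangle estimates give $\|v_h\|_{\mathcal D_h}\lesssim\|\mu_h\|_{\mathcal M_h}$, with constants depending only on shape regularity.
\end{itemize}
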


Since $d_h(\cdot,\cdot)$ vanishes on $\mathcal C_h\times \mathcal C_h$, Lemma \ref{thrm: kernel_b_h} ensures the coercivity of \(\tilde{a}_h(\cdot, \cdot)\) on  \(\mathrm{Ker}\, b_h\), as well as the equivalence between the primal and mixed formulations \eqref{eq:Primal_conform_Formulation} and \eqref{eq: Mixed formulation}. Consequently, \(u_h = \tilde{u}_h\). Then the Babuska-Brezzi theorem yields, thanks to Theorem \ref{thrm: inf-sup_cond}, the well-posedness of \eqref{eq: Mixed formulation}. Note that the multipliers \(\theta_h^i\) and the bilinear forms \(b_h^i(\cdot, \cdot)\) are defined over the entire edges, such that on a cut edge \(F \in \mathcal{F}_h^\Gamma\) there are two multipliers living on \(F\). 

An important aspect of this approach is that \(\theta_h^i\) can be computed locally, as sum of local contributions defined on patches associated with the nodes. For more details, we refer to \cite{these}.


\section{Recovery of a conservative flux in the  global Raviart-Thomas space}\label{sec: sigma_Hdiv_global}
We  present here the local reconstruction of a flux $\sigma_h$ in the global Raviart-Thomas space $\mathcal{ RT}^m_h(\Omega)\subset H(\div,\Omega)$, for $m=0$ or $m=1$.    
The definition is given by imposing the degrees of freedom of $ \sigma_h $. The multipliers $\theta_h^i$ are used as corrections of the normal derivative of the solution, in the definition of the normal flux. We differentiate the cut elements from the non-cut ones. Thus, for  any edge $F\in \mathcal F_h$ and any function $ v \in P^m(F) $, we  impose:
\begin{equation}\label{flux-definition-edge-specific}
		\displaystyle \int_{F}\sigma_h\cdot n_Fv\,ds =
		\begin{cases}
			\displaystyle 	\int_{F}\langle {k_i\nabla u_h^i\cdot n_F}\rangle v\,ds - b_F^i(\theta^i_h,v), &  \text{ if } F\in \mathcal F_h^i\backslash \mathcal F_h^\Gamma,\\[3mm]
			\displaystyle{\sum_{i=1}^{2}\bigg(\int_{F\cap\Omega^i}\langle {k_i\nabla u_h^i\cdot n_F}\rangle v\,ds -b_F^i(\theta^i_h,v)\bigg), } & \text{ if } F\in \mathcal F_h^{\Gamma},\, F \not \subset \Gamma,\\[3mm]
			\displaystyle{ \int_{F}\{K\nabla u_h\cdot n\} v\,ds 
				-\gamma k_\Gamma h_F^{-1}\int_{F} [{u_h}]v\,ds,} & \text{ if } F\subset\Gamma.
		\end{cases}
\end{equation}
These relations allow to uniquely define $\sigma_h\cdot n_F$ in $P^m(F)$. In addition, for $m=1$, we also define interior degrees of freedom as follows: for any $T\in \mathcal T_h$ and any $\zeta\in (P^0(T))^2$, we set
\begin{equation}\label{flux-definition-interior}
		\begin{split}
			\int_{T}\sigma_h\cdot \zeta\,dx
			&= \sum_{i=1}^{2}\int_{T\cap \Omega^i}k_i\nabla u_h^i\cdot \zeta\,dx
			-\int_{\Gamma_T} \{K \zeta\cdot n_\Gamma\}[{u_h}]\,ds,  \\
			&+\sum_{i=1}^{2}\sum_{F\in\mathcal F_g\cap\mathcal{F}_h^{i} \cap \partial T}
			\gamma_g k_ih_F\int_{F}[\![{\nabla u_h^{i}\cdot n_{F}}]\!] [\![{\zeta\cdot n_{F}}]\!]\,ds.
		\end{split}
\end{equation}
	
Note that $\sigma_h$ strongly satisfies the transmission condition on $\Gamma$, that is \([\sigma_h\cdot n_\Gamma]=0\). Moreover, the flux satisfies the natural local conservation property as stated in the next theorem.   
\begin{theorem}
Let $f\in L^2(\Omega)$ defined by $f_{|\Omega^i}=f^i$, for $i=1,2$. One has that: 
\begin{equation}\label{eq:conservation_equalities}
(\div \,\sigma_h)_{|T}=-\pi^m_T f, \qquad \forall T\in \mathcal{T}_h.
\end{equation}
\end{theorem}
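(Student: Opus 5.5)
Since $\div\,\sigma_h|_T \in P^m(T)$ for $\sigma_h\in\mathcal{RT}^m_h(\Omega)$, the identity \eqref{eq:conservation_equalities} holds if and only if
\[
\int_T \div\,\sigma_h\, q\,dx = -\int_T f q\,dx \qquad \forall q\in P^m(T).
\]
We therefore fix $T\in\mathcal T_h$ and $q\in P^m(T)$. Green's formula gives
\[
\int_T \div\,\sigma_h\, q\,dx = \sum_{F\subset\partial T}\epsilon_{T,F}\int_F \sigma_h\cdot n_F\, q\,ds - \int_T\sigma_h\cdot\nabla q\,dx ,
\]
with $\epsilon_{T,F}=\pm1$ the orientation of $n_F$ relative to $T$. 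Since $q|_F\in P^m(F)$ and $\nabla q\in (P^0(T))^2$ (the volume term is absent when $m=0$), every term on the right is fixed by the degrees of freedom \eqref{flux-definition-edge-specific}--\eqref{flux-definition-interior}. The idea is to match the resulting expression with the first equation of the mixed problem \eqref{eq: Mixed formulation}, which holds with $\tilde u_h=u_h$, tested with the discontinuous function $v_h\in\mathcal D_h$ supported on $T$.

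This test function is $v_h=(v_h^1,v_h^2)$ with $v_h^i=q\,\chi_T$ when $T\in\mathcal T_h^i$ and $v_h^i=0$ otherwise. For $m=1$ it belongs to $\mathcal D_h$. For $m=0$ it is a constant, which also lies in $P^1$. For a cut element, both components equal $q$ on $T$, so $[v_h]=0$ on $\Gamma_T$. We then expand $\tilde a_h(u_h,v_h)+b_h(\theta_h,v_h)=l_h(v_h)$ term by term.

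\begin{itemize}
\item \textbf{Volume terms.} They give $\sum_i\int_{T\cap\Omega^i}k_i\nabla u_h^i\cdot\nabla q$.
\item \textbf{Ghost penalty.} It gives the sum over $F\in\mathcal F_g^i\cap\partial T$ of $\gamma_g k_i h_F\int_F[\![\partial_n u_h^i]\!][\![\nabla q\cdot n_F]\!]$, where the jump of $v_h$'s normal derivative reduces to the one-sided contribution of $\nabla q$, i.e.\ $[\![\zeta\cdot n_F]\!]$ with $\zeta=\nabla q$ extended by zero.
\item \textbf{Nitsche terms.} Because $[v_h]=0$, they reduce to $-\int_{\Gamma_T}\{K\nabla q\cdot n_\Gamma\}[u_h]$.
\item \textbf{The term $-d_h$.} Since $u_h\in\mathcal C_h$ by Lemma \ref{thrm: kernel_b_h}, we have $[\![u_h^i]\!]=0$, so only $-\sum_i\sum_{F\subset\partial T}\int_{F\cap\Omega^i}\langle k_i\nabla u_h^i\cdot n_F\rangle[\![v_h^i]\!]$ survives, with $[\![v_h^i]\!]=\epsilon_{T,F}q$.
\item \textbf{The term $b_h$.} It gives $\sum_i\sum_{F\subset\partial T}\epsilon_{T,F}\,b_F^i(\theta_h^i,q)$.
\item \textbf{Right-hand side.} It is $\sum_i\int_{T\cap\Omega^i}f^iq=\int_T fq$.
\end{itemize}

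The volume, ghost-penalty and Nitsche terms are exactly the right-hand side of \eqref{flux-definition-interior} with $\zeta=\nabla q$, i.e.\ they equal $\int_T\sigma_h\cdot\nabla q$. The $d_h$ and $b_h$ contributions combine, edge by edge, into $-\sum_F\epsilon_{T,F}\int_F\sigma_h\cdot n_F\,q$ using the first two cases of \eqref{flux-definition-edge-specific}. On a cut edge this needs the splitting $F=(F\cap\Omega^1)\cup(F\cap\Omega^2)$, and it uses the sign convention that $b_F^i$ is the edge-wise piece of $b_h^i$ with $[\![v]\!]$ replaced by $v$. Hence the mixed equation reads
\[
\int_T\sigma_h\cdot\nabla q - \sum_F\epsilon_{T,F}\int_F\sigma_h\cdot n_F\, q = \int_T fq ,
\]
which is $-\int_T\div\,\sigma_h\,q=\int_T fq$, as required. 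For $m=0$ the same computation holds with $\nabla q=0$, so all volume-type terms vanish.

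The main obstacle is the cut elements, and especially edges $F\subset\Gamma$, where the third case of \eqref{flux-definition-edge-specific} applies. If $\Gamma$ runs along mesh edges, the test function must be chosen as the characteristic function of $T$ in a single subdomain. One must then check that the Nitsche terms with $[v_h]=\pm q$ produce exactly $\{K\nabla u_h\cdot n\}-\gamma k_\Gamma h_F^{-1}[u_h]$, including the symmetric term $\{K\nabla v_h\cdot n_\Gamma\}[u_h]$, which should match the interior term of \eqref{flux-definition-interior}. Some care with $\omega_1,\omega_2$ is needed here, since this is where the identity $\{ab\}=\{a\}\{b\}^*+\dots$ type of manipulations may appear. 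A second point to verify is whether the case $T\notin\mathcal T_h^{\Gamma}$ adjacent to cut edges, where ghost-penalty terms from neighbouring edges enter, is treated consistently.
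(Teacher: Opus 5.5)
Your proposal is correct and is essentially the argument the paper relies on: you test the first equation of the mixed problem (with $\tilde u_h=u_h\in\mathcal C_h$) by $q\chi_T$ in each subdomain meeting $T$, and then identify the result, through Green's formula, with the degrees of freedom \eqref{flux-definition-edge-specific}--\eqref{flux-definition-interior}. The two points you leave open need nothing new. The ghost-penalty terms of elements adjacent to cut cells are already covered by your general computation, and for an edge $F\subset\Gamma$ your $v_h$ is automatically one-sided, so $d_h$ and $b_h$ do not see $F$ and $[v_h]=\epsilon_{T,F}\,q$; the Nitsche consistency and penalty terms then give exactly $-\epsilon_{T,F}\int_F\sigma_h\cdot n_F\,q\,ds$ by the third case of \eqref{flux-definition-edge-specific}, while the symmetric term gives $-\int_F\omega_ik_i\nabla q\cdot n_\Gamma[u_h]\,ds$, which is the $\{K\zeta\cdot n_\Gamma\}$ term of \eqref{flux-definition-interior} with $\zeta=\nabla q$ extended by zero, so no weighted-mean identity is involved.
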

\noindent The proof, based on the mixed formulation and on the definition of $\sigma_h$, can be found in \cite{these}.


\section{Application to a posteriori error analysis}\label{sec: A poster_sigma_Hdiv_glob}

We set  $\tau_h=K^{-1/2}(\sigma_h-K\nabla_h u_h)$, where the discrete gradient is defined by  restriction on any $T\in \mathcal T_h$: $(\nabla_h u_h)_{|T\cap \Omega^i}= (\nabla u_h^ i)_{|T\cap \Omega^i}$ for $i=1,2$. We define the local error estimators:
\begin{equation*}
\eta_{T} =\|\tau_h\|_{T},\quad \forall T\in \mathcal T_h;\qquad
\tilde{\eta}_T^2=  \frac{ h_T k_\Gamma}{|\Gamma_T|h_T^{min}}\| [u_h]\|_{\Gamma_T}^2,\quad \forall T\in \mathcal T_h^\Gamma
\end{equation*}
where  $h_T^{min}$ is the shortest part of the cut edges of $T$: $h_T^{min}=\min\{|F^i|; \  F\in \partial T\cap \mathcal F_h^\Gamma, \  1\leq i \leq 2 \}$.

Let $ \delta_T= k_i $ if $ T\in \mathcal T_h^i\backslash\mathcal T_h^\Gamma $ and  $ \delta_T= k_\Gamma $ if $T\in \mathcal T_h^\Gamma $. The corresponding global error estimators and the data approximation are given by: 
\begin{equation*}
	\eta^2 =\sum_{T\in \mathcal{T}_h}\eta_{T}^2 =\|\tau_h\|_{\Omega}^2,\qquad 
	\quad \eta_{\Gamma}^2=
	\sum_{T\in\mathcal T_h^\Gamma}\tilde{\eta}_T^2,\quad \epsilon(\Omega)^2= 
	\sum_{T\in \mathcal T_h}\frac{h_T^2}{\delta_T}\|f-\pi^m_T f\|_{T}^2,
\end{equation*}

\subsection{Reliability}
Thanks to the fact that $\sigma_h$  strongly satisfies the transmission condition across the interface, we are able to establish the reliability of the a posteriori error estimator $\eta+ \eta_\Gamma$. Let us put  $|v|_{1,K,h} :=\|K^{1/2}\nabla_h v\|_{\Omega}$, for any $v\in H^1(\Omega^1)\times H^1(\Omega^2)$.
	
\begin{theorem}\label{thm: first_bound_rb}
There exists a constant $C$ independent of the mesh, the coefficients and the interface geometry such that  
\begin{equation}
		| u-u_h)|_{1,K,h}\leq \eta +  \inf_{v\in H_0^1(\Omega)}| v- u_h|_{1,K,h} +C\epsilon(\Omega).
\end{equation}
\end{theorem}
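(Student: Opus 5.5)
The plan is the classical Prager--Synge-type splitting of the broken energy error into a conforming part and a nonconforming part.

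\textbf{Step 1: projection.} Let $s\in H^1_0(\Omega)$ be the $K$-energy projection of $u_h$, i.e.
\[
\int_\Omega K\nabla s\cdot\nabla v\,dx=\int_\Omega K\nabla_h u_h\cdot\nabla v\,dx \qquad \forall v\in H^1_0(\Omega).
\]
Then $K\nabla_h(u_h - s)$ is $L^2$-orthogonal to $\nabla H^1_0(\Omega)$. Since $u-s\in H^1_0(\Omega)$, this orthogonality gives the Pythagorean identity
\[
|u-u_h|_{1,K,h}^2=|u-s|_{1,K,h}^2+|s-u_h|_{1,K,h}^2 ,
\]
and $|s-u_h|_{1,K,h}=\inf_{v\in H^1_0(\Omega)}|v-u_h|_{1,K,h}$. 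It then suffices to show $|u-s|_{1,K,h}\le \eta+C\epsilon(\Omega)$, because $\sqrt{a^2+b^2}\le a+b$.

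\textbf{Step 2: bound the conforming part.} Put $w=u-s\in H^1_0(\Omega)$. Using \eqref{eq: continuous_problem_weak} and the definition of $s$,
\[
|w|_{1,K,h}^2=\int_\Omega K\nabla(u-s)\cdot\nabla w\,dx=\int_\Omega f w\,dx-\int_\Omega K\nabla_h u_h\cdot\nabla w\,dx .
\]
Insert $\sigma_h$. Since $\sigma_h\in\mathcal{RT}^m_h(\Omega)\subset H(\div,\Omega)$ and $w=0$ on $\partial\Omega$, we have $\int_\Omega \sigma_h\cdot\nabla w\,dx=-\int_\Omega \div\sigma_h\, w\,dx$. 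No interface term appears, because $\sigma_h$ is globally $H(\div)$-conforming and in particular $[\sigma_h\cdot n_\Gamma]=0$. The local conservation \eqref{eq:conservation_equalities} then gives
\[
|w|_{1,K,h}^2=\sum_{T\in\mathcal T_h}\int_T (f-\pi^m_T f)\,w\,dx+\int_\Omega (\sigma_h-K\nabla_h u_h)\cdot\nabla w\,dx .
\]
By Cauchy--Schwarz, the second term is bounded by $\|\tau_h\|_\Omega\,|w|_{1,K,h}=\eta\,|w|_{1,K,h}$.

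\textbf{Step 3: data oscillation, the main obstacle.} In the first term, $f-\pi^m_Tf$ is orthogonal to constants on $T$, so $w$ may be replaced by $w-\pi^0_T w$. The Poincaré inequality on the triangle $T$ gives
\[
\|w-\pi^0_T w\|_T\le \frac{h_T}{\pi}\,\|\nabla w\|_T .
\]
What remains is to bound $\|\nabla w\|_T$ by $\delta_T^{-1/2}\|K^{1/2}\nabla w\|_T$ uniformly in the coefficients.
\begin{itemize}
\item On uncut elements $K=k_i=\delta_T$, so this holds with constant one.
\item On cut elements $K\ge\min(k_1,k_2)\ge k_\Gamma=\delta_T$, because the harmonic mean satisfies $k_\Gamma\le\min(k_1,k_2)$. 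Hence $\|\nabla w\|_T\le k_\Gamma^{-1/2}\|K^{1/2}\nabla w\|_T$.
\end{itemize}
Note that $\Gamma$ splits $T$ into pieces of arbitrary shape. The argument avoids any Poincaré inequality on $T\cap\Omega^i$: it applies Poincaré on the whole triangle $T$, which is legitimate because $w\in H^1(T)$, and $f$ and $\pi^m_Tf$ live on all of $T$. This keeps $C$ independent of the interface geometry and of the coefficients.

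Summing over $T$ with a discrete Cauchy--Schwarz gives the first term $\le C\epsilon(\Omega)\,|w|_{1,K,h}$, with $C=1/\pi$. Dividing by $|w|_{1,K,h}$ yields $|u-s|_{1,K,h}\le\eta+C\epsilon(\Omega)$, and combining with Step 1 concludes the proof.
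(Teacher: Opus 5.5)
Your proof is correct and follows the paper's argument. You use the same energy projection $s=\varphi$ of $u_h$ onto $H^1_0(\Omega)$, the same insertion of $\sigma_h$ with global integration by parts and the conservation property, and the same oscillation bound via Poincaré on the whole triangle $T$ together with $k_\Gamma\le\min(k_1,k_2)$. The differences are cosmetic: you use the Pythagorean identity (valid since $K\nabla_h(u_h-s)\perp\nabla H^1_0(\Omega)$) where the paper uses the triangle inequality, and you make the constant explicit as $C=1/\pi$.
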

\begin{proof}
Let $\sigma=K\nabla u$ and let  $\varphi \in H^1_0(\Omega)$ be the unique solution of the weak problem:
\begin{equation}\label{eq:aposteriori_phi}
		\int_{\Omega}K\nabla\varphi\cdot \nabla w \,dx = \sum_{i=1}^2\int_{\Omega^i}K\nabla u_h^i\cdot \nabla w \,dx \quad \forall w \in H^1_0(\Omega).
\end{equation}
	
The triangle inequality yields that: 
\begin{equation}\label{eq: triangl_inq_2}
		|u-u_h|_{1,K,h}\leq  |u-\varphi|_{1,K,h}+ |\varphi-u_h|_{1,K,h},
\end{equation}
where, thanks to the definition of $\tau_h$, we have:
\begin{equation*}
|u-\varphi|_{1,K,h}^2=\int_{\Omega}\nabla(u-\varphi)\cdot(\sigma-\sigma_h)\,dx +\int_{\Omega} K^{1/2}\nabla(u-\varphi)\cdot\tau_h\,dx+\int_{\Omega} \nabla(u-\varphi)\cdot K(\nabla_hu_h-\nabla \varphi)\,dx.
\end{equation*}
The last term vanishes thanks to (\ref{eq:aposteriori_phi}) with the test-function $w:=u-\varphi$, so we further get that:
\begin{equation}\label{eq: aposteriori_step2_2}
			|u-\varphi|_{1,K,h}^2\leq \eta|u-\varphi|_{1,K,h}+ \left |\int_{\Omega}\nabla(u-\varphi)\cdot(\sigma-\sigma_h)\,dx  \right|.
	\end{equation}
We integrate by parts the last term of \eqref{eq: aposteriori_step2_2}. Since  $(u-\varphi)\in H^1_0(\Omega)$ and $(\sigma-\sigma_h)\in H(\div,\Omega)$, we obtain using the conservation property (\ref{eq:conservation_equalities}) that:
\begin{equation*}
\int_{\Omega}\nabla(u-\varphi)\cdot(\sigma-\sigma_h)\,dx =\int_{\Omega}(f-\pi_T^mf)(u-\varphi)\,dx.
\end{equation*}
Thanks to the Cauchy-Schwarz inequality, to the fact  that $k_\Gamma \leq k_i$  $(1\le i\le 2)$ and to the orthogonal projection's property ($\|w-\pi_T^m w\|_T\lesssim h_T \|\nabla w\|_T$ with $w=u-\varphi$), one gets:
\begin{equation*}
\begin{split}
\left|	\int_{\Omega}(f-\pi_T^mf)(u-\varphi)\,dx\right| &\lesssim |u-\varphi|_{1,K,h}\bigg( \sum_{i=1}^2\sum_{T\in\mathcal T_h^i\backslash \mathcal T_h^\Gamma}\frac{h_T^2}{k_i}\|f-f_h\|_{T}^2+\sum_{T\in \mathcal T_h^\Gamma}\frac{h_T^2}{k_{\Gamma}}\|f-f_h\|_{T}^2\bigg)^{1/2}\\
&=|u-\varphi|_{1,K,h} \epsilon(\Omega).
\end{split}
\end{equation*}
Using the previous estimate in \eqref{eq: aposteriori_step2_2}, we end up with the following bound:
\begin{equation}\label{eq: aposteriori_bound_1}
|u-\varphi|_{1,K,h}\leq \eta + C\epsilon (\Omega).
\end{equation}

On the other hand, from problem (\ref{eq:aposteriori_phi}) we have that $|\varphi-u_h|_{1,K,h}= \displaystyle{\inf_{v\in H_0^1(\Omega)}}|v-u_h|_{1,K,h}$. Using the last relation and (\ref{eq: aposteriori_bound_1}) in \eqref{eq: triangl_inq_2} yields the result.
\end{proof}

Since  
\begin{equation}\label{eq:inf_bound}
\inf_{v\in H_0^1(\Omega)}|v-u_h|_{1,K,h}\leq |I_hu_h-u_h|_{1,K,h},	
\end{equation}
where $I_hu_h$ is any approximation of $u_h$ belonging to $H^1_0(\Omega)$, the next step consists in constructing  an interpolation $I_h u_h\in H^1_0(\Omega)$ such that the interpolation error is bounded by $\eta_\Gamma$. In what follows, we focus on the cut triangles since on $T\in\mathcal T_h\backslash \mathcal T_h^\Gamma$, we simply take $I_hu_h=u_h$.

Let $T\in \mathcal T_h^\Gamma$. Without loss of generality, we denote the triangle $T$ by $\triangle A_1A_2A_3$ and let $M\in A_1A_2$,  $N\in A_1A_3$ denote the intersection points  with $\Gamma$.  For simplicity  of notation, we denote by  $T^\triangle$ and $T^\square$ the triangular and quadrilateral parts of the cut triangle $T$, respectively, and assume that $T^\triangle=T^1$, $T^\square= T^2$. Since the ratios   $|A_2M| /|A_3N|$ and  $|A_3N| / |A_2M|$ cannot  blow up simultaneously, we assume, without loss of generality, that $|A_2M| / |A_3N|$ is bounded and then we cut $T^\square$ into two triangles using $A_3M$, see Figure \ref{Fig: cut_I_h_TRiangle} (otherwise, we use $A_2N$).
\begin{figure}[ht]
\centering
\includegraphics[width=0.28\textwidth]{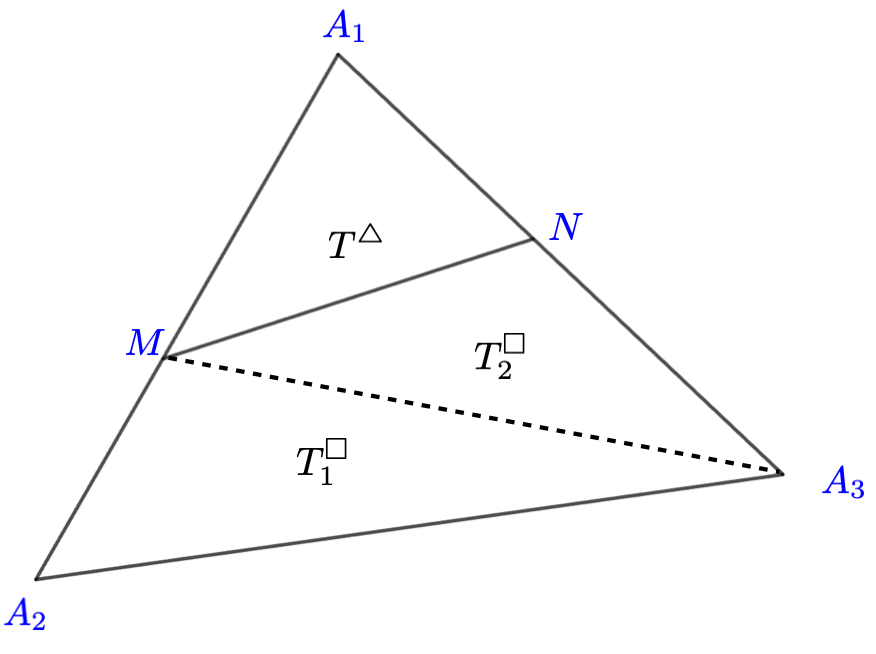}
\caption{Notation on a cut element for the interpolation}
\label{Fig: cut_I_h_TRiangle}
\end{figure}

We now define the interpolation operator $I_h$ as follows: $I_hu_h$ is a linear function on each sub-triangle $\triangle A_1MN=:T^\triangle$, $\triangle MNA_3=:T^\square_2$ and $\triangle MA_2A_3=:T^\square_1$, and satisfies: 
\begin{equation}\label{eq: def_I_h}
I_hu_h(A_i)=u_h(A_i) \quad (1\le i \le 3),\quad I_hu_h(M)=\{u_h\}^*(M),\quad I_hu_h(N)=\{u_h\}^*(N) .
\end{equation}

This choice guarantees that $I_h u_h\in H^1(T)$, since it is piecewise linear on $T$ and continuous at the points $M$, $N$ and $A_3$. Clearly, $I_hu_h$ belongs to $H^1(T)$ for any $T\in \mathcal T_h$ and $[\![{I_hu_h}]\!]_F=0$ for any edge $F\in \mathcal F_h$, hence $I_hu_h\in H^1_0(\Omega)$.    

\begin{theorem}\label{thm:interp_bound} One has that $|I_hu_h-u_h|_{1,K,h}\lesssim  \eta_\Gamma$. 
\end{theorem}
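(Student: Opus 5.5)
The estimate is local, so I will prove it cut element by cut element. On $T\in\mathcal T_h\backslash\mathcal T_h^\Gamma$ we have $I_hu_h=u_h$, so nothing needs to be done. For $T\in\mathcal T_h^\Gamma$ I will show
\[
\sum_{i=1}^2 k_i\|\nabla(I_hu_h-u_h^i)\|^2_{T^i}\lesssim \tilde\eta_T^2
\]
and then sum over $T$. The starting point is that $I_hu_h-u_h^i$ is piecewise linear on the sub-triangles $T^\triangle$, $T^\square_1$, $T^\square_2$, so it is determined by its nodal values there. At the vertices $A_j$ lying in the closure of $\Omega^i$, $u_h(A_j)=u_h^i(A_j)$, so the difference vanishes. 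At $M$ and $N$ we have
\[
\{u_h\}^*-u_h^1=\omega_1(u_h^2-u_h^1)=-\omega_1[u_h],\qquad \{u_h\}^*-u_h^2=\omega_2[u_h].
\]
Hence on each sub-triangle the error is a linear function whose only nonzero nodal values are weighted jumps at $M$ and/or $N$. The key weight identities are $k_1\omega_1^2\le k_1\omega_1=k_\Gamma$ and $k_2\omega_2^2\le k_\Gamma$, so the coefficients combine exactly into $k_\Gamma$.

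Next I will bound the gradient of each such linear function on a sub-triangle $S$. If $\lambda$ is the barycentric function of a node $P$ of $S$, then $\|\nabla\lambda\|_S^2=|S|/\rho_P^2$, where $\rho_P$ is the height from $P$. The three cases are as follows.

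\textbf{On $T^\triangle=\triangle A_1MN$.} The error is $-\omega_1([u_h](M)\lambda_M+[u_h](N)\lambda_N)$. Using $|T^\triangle|\lesssim |A_1M|\,|A_1N|$ and heights $\rho_M\gtrsim|A_1M|$, $\rho_N\gtrsim|A_1N|$ (from the angle at $A_1$, which is bounded below by mesh regularity), one gets
\[
\|\nabla\lambda_M\|^2\lesssim \frac{|A_1N|}{|A_1M|}\lesssim\frac{h_T}{h_T^{min}} .
\]

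\textbf{On $T^\square_1=\triangle MA_2A_3$.} Only $\lambda_M$ appears. Its height is $\rho_M\sim|A_2M|$ (up to the angle at $A_2$) and the area is $\sim h_T|A_2M|$, giving $h_T/|A_2M|\le h_T/h_T^{min}$.

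\textbf{On $T^\square_2=\triangle MNA_3$.} Here the choice of diagonal enters: by assumption $|A_2M|/|A_3N|$ is bounded, which should keep $T^\square_2$ from being degenerate relative to the relevant lengths. I expect this step to be the \emph{main obstacle}, namely deriving $\|\nabla\lambda_{M}\|^2,\|\nabla\lambda_N\|^2\lesssim h_T/h_T^{min}$ on the possibly flat triangle $MNA_3$. I would first compute the heights explicitly via the angle at $A_1$ and elementary trigonometry, using the fixed ratio bound to avoid the degenerate configuration.

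It remains to convert nodal values into the $L^2$ norm of $[u_h]$ on $\Gamma_T=MN$. Since $[u_h]$ is linear on $MN$,
\[
[u_h](M)^2+[u_h](N)^2\lesssim |\Gamma_T|^{-1}\|[u_h]\|^2_{\Gamma_T}.
\]
Combining this with the gradient bounds and the weight identities yields
\[
\sum_i k_i\|\nabla(I_hu_h-u_h^i)\|^2_{T^i}\lesssim \frac{h_T k_\Gamma}{|\Gamma_T|h_T^{min}}\|[u_h]\|^2_{\Gamma_T}=\tilde\eta_T^2 ,
\]
with constants depending only on the shape regularity. Summing over $T\in\mathcal T_h^\Gamma$ then gives the theorem.
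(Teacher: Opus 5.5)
Your approach is the paper's: the same local reduction to cut cells, the same nodal representation $\mp\omega_i[u_h]$ of $I_hu_h-u_h^i$ at $M,N$, the same identity $k_i\omega_i=k_\Gamma$, and the same final step $[u_h](M)^2+[u_h](N)^2\lesssim|\Gamma_T|^{-1}\|[u_h]\|_{\Gamma_T}^2$. Your bounds on $T^\triangle$ and $T^\square_1$ are correct. But the proposal stops at the one estimate that carries the content of the theorem, the bound for $\lambda_N$ on $T^\square_2=\triangle MNA_3$, which you only announce.

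Flatness of $T^\square_2$ is not what goes wrong, and it does not affect $\lambda_M$. The height from $M$ onto the line $A_1A_3$ is $|A_1M|\sin\theta_1$, with $\theta_1$ the angle of $T$ at $A_1$. This gives $\|\nabla\lambda_M\|^2_{T^\square_2}=|A_3N|/(2|A_1M|\sin\theta_1)\lesssim h_T/h_T^{min}$ with no assumption. For $\lambda_N$ the relevant height is the distance from $N$ to the line $MA_3$. Computing it from $|T^\square_2|=\tfrac12|A_1M||A_3N|\sin\theta_1$ and $|MA_3|\sim h_T$ gives
\[
\|\nabla\lambda_N\|^2_{T^\square_2}=\frac{|MA_3|^2}{2|A_1M||A_3N|\sin\theta_1}\sim\frac{h_T^2}{|A_1M||A_3N|}.
\]
This is not bounded by $h_T/h_T^{min}$ in general. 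Taking $|A_1M|=|A_3N|=\varepsilon$ gives $h_T^2/\varepsilon^2\gg h_T/\varepsilon$, and since $[u_h](N)$ is arbitrary, the estimate genuinely fails for that configuration.

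The missing argument is how the choice of diagonal excludes this case. Since $|A_1M|+|A_2M|=|F_3|\sim h_T$, there are two possibilities:
\begin{itemize}
\item If $|A_1M|\ge|F_3|/2$, the bound is $\lesssim h_T/|A_3N|$.
\item If $|A_2M|\ge|F_3|/2$, the assumption $|A_2M|\lesssim|A_3N|$ forces $|A_3N|\gtrsim h_T$, and the bound is $\lesssim h_T/|A_1M|$.
\end{itemize}
In both cases $\|\nabla\lambda_N\|^2_{T^\square_2}\lesssim h_T/h_T^{min}$. The counterexample above, where $|A_2M|/|A_3N|\approx h_T/\varepsilon$, is exactly the configuration in which one must cut along $A_2N$ instead.

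The paper reaches the same conclusion algebraically. It writes out $E_\beta(T)$ exactly and splits $|F_3|=|A_1M|+|A_2M|$, which isolates the factor $|A_2M|/|A_3N|$. With this step supplied, your proof is complete and coincides with the paper's.
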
 
 
\begin{proof} Clearly, $|I_hu_h-u_h|_{1,K,h}^2=\sum_{T\in \mathcal T_h^{\Gamma}} |I_hu_h-u_h|_{1,K,T}^2$ so let $T\in \mathcal T_h^{\Gamma}$. Since 
\begin{equation}\label{eq: decomposition_termGamma}
|I_h u_h-u_h|_{1,K,T}^2=|I_hu_h-u_h^1|_{1,K,T^\triangle}^2+|I_hu_h-u_h^2|_{1,K, T^\square_1}^2+ |I_hu_h-u_h^2|_{1,K,T^\square_2}^2,
\end{equation}
we have to bound  $(I_hu_h-u_h)_{|\tilde T}$ for any $\tilde T\in \{T^\triangle,T^\square_1,T^\square_2\}$. For $\tilde T\in \Omega^i$, one has thanks to \eqref{eq: def_I_h}:
\begin{equation*}
(I_hu_h-u_h)_{|\tilde T}= (-1)^i\omega_i[u_h](M) \varphi_M +(-1)^i\omega_i[u_h](N)\varphi_N, 
\end{equation*}
where  $\varphi_M, \varphi_N\in P^1(\tilde T)$ are the nodal basis functions associated to $M$ and $N$, respectively. 

So in order to bound the three terms of \eqref{eq: decomposition_termGamma}, we first express $\varphi_M$ and $\varphi_N$ on each of the three sub-triangles, in terms of the barycentric coordinates $(\lambda_j)_{1\leq j\leq 3}$ of $T$. After some technical computations (detailed in \cite{these}), one gets for $\tilde T=T^\triangle$ that
\begin{equation}\label{eq: phi_M_T_delta}
(\varphi_M)_{|T^\triangle}=\frac{|F_3|}{|A_1M|}\lambda_2, \quad (\varphi_N)_{|T^\triangle}=\frac{|F_2|}{|A_1N|}\lambda_3,
\end{equation}
where $|F_2|=|A_1A_3|$ and $|F_3|=|A_1A_2|$. Meanwhile, for $\tilde T= T^\square_1$ and  $\tilde T=T^\square_2$, one obtains that: 
\begin{equation}\label{eq: phi_M_T_squar_1}
(\varphi_M)_{|T^\square_1}=\frac{|F_3|}{|A_2M|}\lambda_1,	\quad (\varphi_N)_{|T^\square_1}=0,
\end{equation}
\begin{equation}\label{eq: phi_M_T_squar_2}
(\varphi_M)_{|T^\square_2}=\frac{|F_3|}{|A_1M|}\lambda_2,	 \quad (\varphi_N)_{|T^\square_2}= \frac{|F_2|}{|NA_3|} \lambda_1 - \frac{|F_2|}{|NA_3|} \frac{|MA_2|}{|MA_1|}\lambda_2.
\end{equation}

We next bound $|I_hu_h-u_h|_{1,K,\tilde{T}}^2$ on each sub-triangle $\tilde  T$. Let $\alpha =[u_h](M)$ and $\beta=[u_h](N)$.

Let $\tilde  T=T^\triangle$ first. Using that  $k_\Gamma=k_i\omega^i$ and that  $0\leq \omega^i\leq 1$, one gets that
\begin{equation*}
|I_hu_h-u_h^1|_{1,T^\triangle,K}^2 \leq  k_\Gamma |W_h^\triangle|_{1,T^\triangle}^2
\end{equation*}
where $W_h^\triangle\in P^1(T^\triangle)$ satisfies
$W_h^\triangle(M)= -\alpha$, $W_h^\triangle(N)=-\beta$ and $W_h^\triangle(A_1)=0$. Thus, it follows that $W_h^\triangle=-\alpha \varphi_M-\beta \varphi_N$. From \eqref{eq: phi_M_T_delta} and using also that 
\begin{equation*}
|\lambda_i|_{1,T^\triangle}^2=\frac{|T^\triangle|}{|T|}|\lambda_i|_{1,T}^2\quad (1\leq i\leq 3),\qquad  |\lambda_i|_{1,T}\lesssim 1,\qquad |T|\approx h_T^2,
\end{equation*}
one  obtains the following bound:  
\begin{equation}\label{eq: W_h_0}
|W_h^\triangle|_{1,T^\triangle}^2\lesssim \frac{|T^\triangle|}{|A_1M|^2}\alpha^2+\frac{|T^\triangle|}{|A_1N|^2}\beta^2. 
\end{equation}

Similarly, from \eqref{eq: phi_M_T_squar_1} and \eqref{eq: phi_M_T_squar_2} and the properties of $\lambda_j$, we deduce that:
\begin{equation}\label{eq: W_h_1}
|W_h^1|_{1,T^\square_1}^2\leq \alpha^2 \frac{|T^\square_1|}{|T|} \frac{|F_3|^2}{|A_2M|^2} |\lambda_1|_{1,T}^2\lesssim \frac{|T^\square_1|}{|A_2M|^2}\alpha^2,
\end{equation}
as well as 
\begin{equation}\label{eq: W_h_2}
|W_h^2|_{1,T^\square_2}^2\lesssim \alpha^2 \frac{|T^\square_2|}{|A_1M|^2} + \beta^2 \frac{|T^\square_2|}{|A_3N|^2}\frac{|F_3|^2}{|A_1M|^2}.
\end{equation} 

Combining \eqref{eq: W_h_0}, \eqref{eq: W_h_1} and \eqref{eq: W_h_2}, one gets that:
\begin{equation}\label{eq: bound interpolation E_A_E_b}
|I_hu_h-u_h|_{1,K,T}^2\lesssim k_\Gamma \alpha^2\underset{E_\alpha(T)}{\underbrace{\bigg( \frac{|T^\triangle|}{|A_1M|^2}+ \frac{|T^\square_1|}{|A_2M|^2}+\frac{|T^\square_2|}{|A_1M|^2}\bigg)}} + k_\Gamma \beta^2\underset{E_\beta(T)}{\underbrace{\bigg(  \frac{|T^\triangle|}{|A_1N|^2}+ \frac{|T^\square_2|}{|A_3N|^2}\frac{|F_3|^2}{|A_1M|^2} \bigg)}}.
\end{equation} 

The next step consists in bounding the expressions $E_\alpha(T)$ and $E_\beta(T)$, in the best possible way with respect to the geometry of the cut cell.
Developing the expressions of $ E_\alpha(T) $ and  $ E_\beta(T)$ and using that $|A_1N|+|A_3N|=|F_2|$ and $|A_1M|+|A_2M|=|F_3|$, one obtains cf. \cite{these} that:  
\begin{equation}\label{eq: E_alpha_}
E_\alpha(T)=\frac{|T|}{|A_1M||A_2M|},\quad E_\beta(T)= \frac{|T|}{|F_2|}\bigg(\frac{|A_1M|}{|A_1N||F_3|}+\frac{|F_3|}{|A_1M||A_3N|}\bigg).
\end{equation}
Using that $|A_1M|\leq |F_3|$, we further obtain that: 
\begin{equation}\label{eq: E_beta_}
E_\beta(T)\leq \frac{|T|}{|F_2|}\bigg(\frac{1}{|A_1N|}+\frac{|F_3|}{|A_1M||A_3N|}\bigg)=\frac{|T|}{|A_1N||A_3N|}+\frac{|T|}{|F_2||A_1M|}\frac{|A_2M|}{|A_3N|}.
\end{equation} 

Recalling that $h_T^{min}= \min\{|A_1M|,|A_2M|, |A_1N|,|A_3N|\}$ and that $|A_1M|+|A_2M|=|F_3|$, we deduce from  \eqref{eq: E_alpha_} and \eqref{eq: E_beta_}, after some straightforward calculations, that:
\begin{equation*}
E_{\alpha}(T)\leq \frac{2|T|}{h_T^{min}|F_3|}\lesssim \frac{h_T}{h_T^{min}}, \quad E_\beta \lesssim  \frac{h_T}{h_T^{min}}+ \frac{h_T}{h_T^{min}} \frac{|A_2M|}{|A_3N|}.
\end{equation*}
Our initial choice of cutting $T^\square$ into two sub-triangles ensures that $\displaystyle\frac{|A_2M|}{|A_3N|}$ is bounded (otherwise, we change the roles of the points $ M $ and $N$). So we finally get
\begin{equation}\label{eq: E_final}
E_\alpha\lesssim\frac{h_T}{h_T^{min}}, \quad 
E_\beta \lesssim  \frac{h_T}{h_T^{min}}.
\end{equation} 

Using \eqref{eq: E_final} in \eqref{eq: bound interpolation E_A_E_b} yields that:
\begin{equation*}
|I_hu_h-u_h|_{1,K,T}^2\lesssim \frac{h_Tk_\Gamma}{h_T^{min}} (\alpha^2+\beta^2)\lesssim \frac{h_Tk_\Gamma}{h_T^{min}|\Gamma_T|}\|[u_h]\|_{\Gamma_T}^2=\tilde{\eta}_T^2,\quad \forall T\in \mathcal T_h^{\Gamma},
\end{equation*}
hence summing upon the cut triangles ends the proof.
\end{proof}

Thanks to \eqref{eq:inf_bound} and to Theorems \ref{thm: first_bound_rb} and \ref{thm:interp_bound}, we finally obtain the desired reliability bound.
\begin{theorem}[Reliability]
There exists $C>0$ independent of the mesh, the coefficients and the interface geometry such that  
\begin{equation}
	|u-u_h|_{1,K,h}\leq \eta +C( \eta_\Gamma+ \epsilon(\Omega) ).
\end{equation}
\end{theorem}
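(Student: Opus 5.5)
The bound follows by chaining the three earlier results; no new estimate is required. I start from Theorem \ref{thm: first_bound_rb}, which gives
\[
|u-u_h|_{1,K,h}\leq \eta+\inf_{v\in H_0^1(\Omega)}|v-u_h|_{1,K,h}+C\epsilon(\Omega),
\]
where the constant in front of $\eta$ is exactly $1$. The task is then to control the best-approximation term by $\eta_\Gamma$ with a constant independent of the mesh, the coefficients and the interface geometry.

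For this, I use \eqref{eq:inf_bound} with the particular choice $I_hu_h$ defined by \eqref{eq: def_I_h} on cut triangles, and $I_hu_h=u_h$ elsewhere. As observed after \eqref{eq: def_I_h}, this function is piecewise linear and continuous across every edge. On a non-cut element of $\mathcal T_h^i$, the restriction of $u_h$ is $u_h^i$, which belongs to $\mathcal C_h^i$ and is therefore continuous there. The Dirichlet condition is also satisfied: a boundary edge of $\partial\Omega$ is either uncut, so that $u_h^i=0$ on $\partial\Omega^i\setminus\Gamma$, or it contains an interface point $M$ on $\partial\Omega$. In the latter case $\{u_h\}^*(M)$ is a convex combination of $u_h^1(M)=0$ and $u_h^2(M)=0$, hence vanishes. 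Thus $I_hu_h$ is admissible, and
\[
\inf_{v\in H_0^1(\Omega)}|v-u_h|_{1,K,h}\leq |I_hu_h-u_h|_{1,K,h}\lesssim\eta_\Gamma
\]
by Theorem \ref{thm:interp_bound}.

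Inserting this into the first inequality gives
\[
|u-u_h|_{1,K,h}\leq\eta+C'\eta_\Gamma+C\epsilon(\Omega),
\]
and taking the maximum of the two constants yields the statement. The only point to check is robustness: the hidden constant in Theorem \ref{thm:interp_bound} must be independent of $k_i$ and of the cut position. This is the main potential obstacle. It holds because the proof of that theorem uses only $k_\Gamma=k_i\omega_i$ with $\omega_i\le1$, mesh regularity, and the choice of the diagonal ($A_3M$ or $A_2N$) that keeps the ratio $|A_2M|/|A_3N|$ (or its inverse) bounded. The geometric degeneracy is thus absorbed into the factor $h_T/(|\Gamma_T|h_T^{min})$ inside $\tilde\eta_T$ rather than into the constant. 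The constant $C$ in Theorem \ref{thm: first_bound_rb} is likewise independent of the coefficients and the geometry, since it comes only from the Poincaré-type estimate for $\pi^m_T$ together with $k_\Gamma\le k_i$.
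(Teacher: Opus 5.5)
Your proposal is correct and follows the paper's argument: the reliability bound comes from combining Theorem \ref{thm: first_bound_rb}, the estimate \eqref{eq:inf_bound} with the interpolant $I_hu_h$ of \eqref{eq: def_I_h}, and Theorem \ref{thm:interp_bound}. Your extra verification that $I_hu_h$ vanishes on cut boundary edges is a useful detail that the paper only asserts; it holds because $\{u_h\}^*(M)$ is a convex combination of two vanishing traces.
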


\subsection{Numerical experiment}\label{sec: Numerical_test_RT1_global}
    
We consider an interface problem characterized by an intricate petal-shaped interface, see \cite{he2019residual}. The exact solution is defined by means of the level set function $\phi$ as follows:
\[
u(x, y) = 
\begin{cases} 
\frac{1}{k^-} \phi(x, y), & \text{if } \phi(x, y) < 0, \\
\frac{1}{k^+} \phi(x, y), & \text{if } \phi(x, y) \geq 0,
\end{cases}, \quad \phi(x, y) = (x^2 + y^2)^2 \left(1 + 0.5 \sin\left(12 \tan^{-1}\left(\frac{y}{x}\right)\right)\right) - 0.3,
\]
where \((x, y) \in \Omega = [-1, 1]^2\) and $ k^-=1, \, k^+=100k^- $. We set a non-homogeneous  Dirichlet condition. \textcolor{black}{As regards the refinement, we use the D\"{o}rfler marking strategy \cite{dorfler1996convergent}, i.e. we look for the set of elements \(\mathcal T_h^m\) with minimal cardinal such that  \( \theta \eta(\mathcal T_h)^2 \leq \eta(\mathcal T_h^m)^2\). In the adaptive mesh refinement procedure, the marking percent \(\theta\) is set to be 20\%, i.e. the ordered elements that account to the top 20\% of the total error estimator get refined.} As stopping criteria, we  impose that the total number of degrees of freedom $N$ is less than $20 000$.

\begin{figure}[h]
	\centering
	{\includegraphics[width=0.3\textwidth]{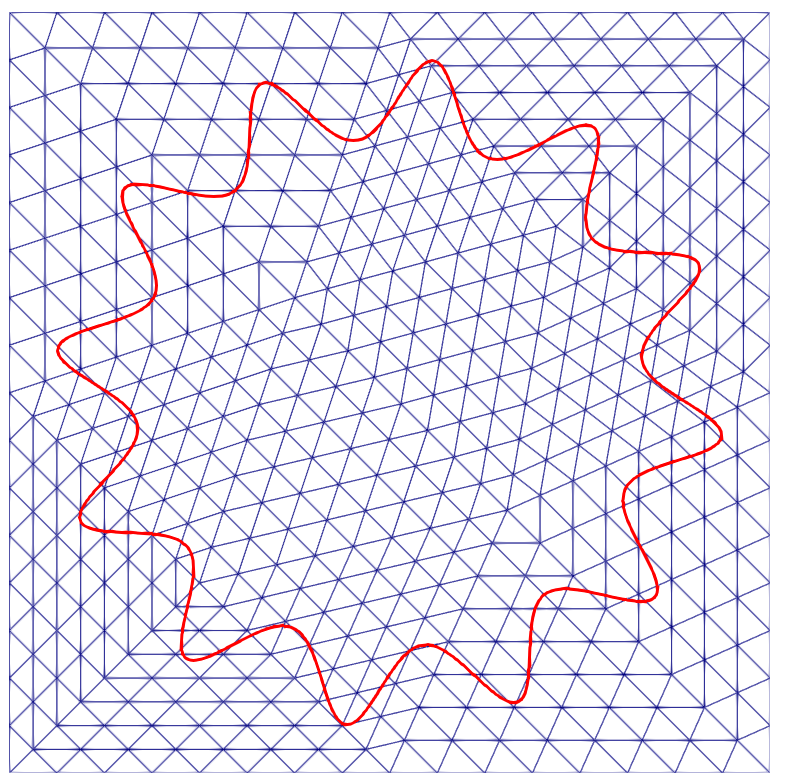}}
	{\includegraphics[width=0.3\textwidth]{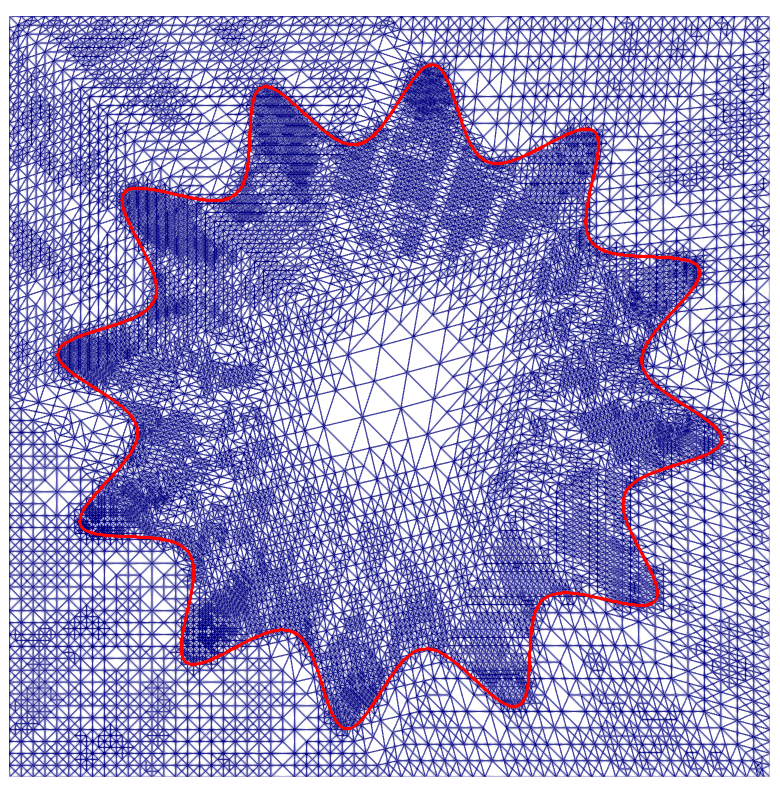}}
	\includegraphics[width=.36\textwidth]{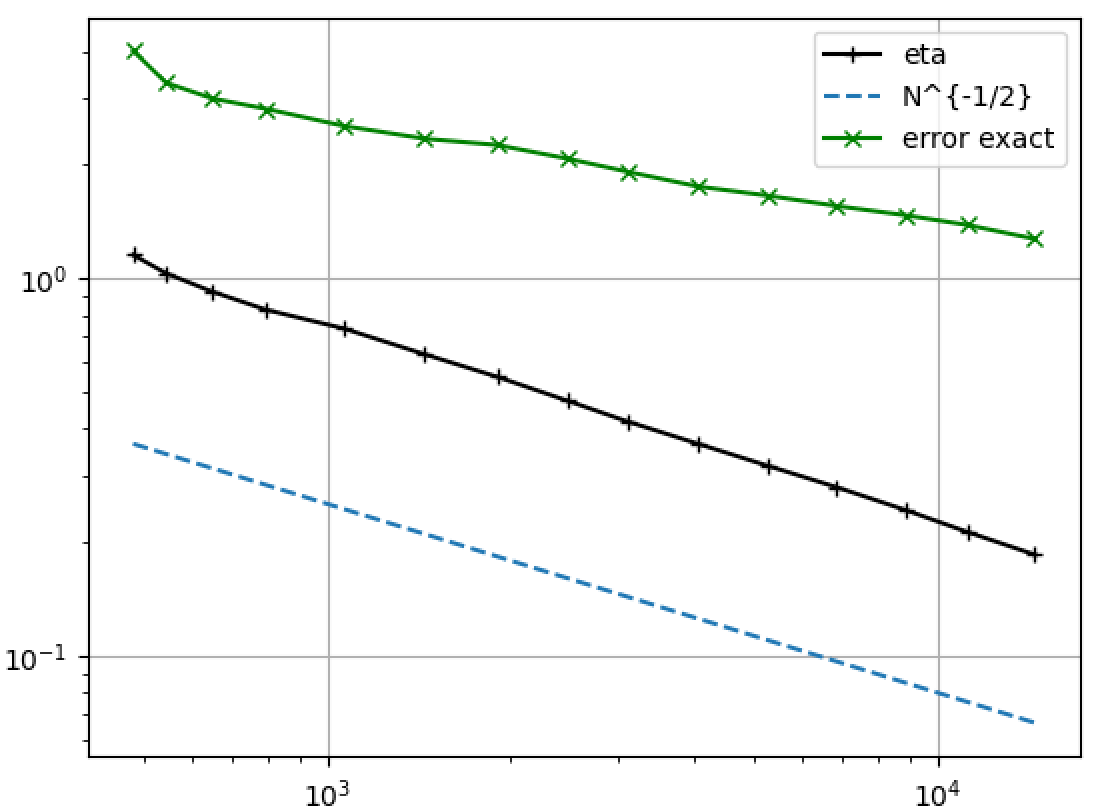}
	\caption{Initial and adaptive final meshes. Error slope}
	\label{fig1:Meshes_Ptal}
\end{figure}

Figure \ref{fig1:Meshes_Ptal} shows the initial mesh (left) and the final mesh after 14 iterations (middle), as well as the obtained convergence rates (right). We observe that the refinement around the interface is rather dense,  due to the large curvature of the interface. The convergence plot of Figure \ref{fig1:Meshes_Ptal}  indicates the optimal rate decay for both the error and estimator, that is $ O(N^{-1/2}) $.

\vspace{1cm}

\textbf{ This project has received funding from the European Union's Horizon H2020 Research and Innovation under the Marie Curie Grant Agreement $\text{N}^{\circ}$ 945416.}
\bibliographystyle{plain}
\bibliography{bibmodel_2}


\hspace{(-0.6cm)}Aimene Gouasmi \\
LMAP \& CNRS UMR 5142, Universit\'e de Pau et des Pays de l'Adour, IPRA BP 1155, 64013 Pau /  LAMPS,
Universit\'e de Perpignan Via Domitia, B\^atiment B3, Avenue Paul Alduy, 66860 Perpignan \\
\texttt{aimene.gouasmi@univ-pau.fr}

\end{document}